\newtheorem{theorem}{Theorem}[section]
\newtheorem*{theorem*}{Theorem}
\newtheorem{lemma}[theorem]{Lemma}
\newtheorem{proposition}[theorem]{Proposition}
\newtheorem{corollary}[theorem]{Corollary}
\newtheorem{definition}[theorem]{Definition}
\theoremstyle{remark}
\newtheorem{remark}[theorem]{Remark}
\newtheorem{note}[theorem]{}
\newtheorem{example}[theorem]{Example}
\def\coc{\!\!\sim}
\def\Q{\mathbb Q}
\def\K{\mathbb K}
\def\F{\mathbb F}
\def\Z{\mathbb Z}
\def\N{\mathbb N}
\def\norma{\|\cdot\|}
\def\XnX{(X,\|\cdot\|_X)}
\def\YnY{(Y,\|\cdot\|_Y)}
\def\Iso{\operatorname{Iso}}
\title{Isometries of ultrametric normed spaces}
\begin{document}

\author
{Javier~Cabello~Sánchez, José~Navarro~Garmendia*}
\address{*Corresponding author: Departamento de Matem\'{a}ticas, Universidad de Extremadura, 
Avenida de Elvas s/n, 06006; Badajoz. Spain}
\email{coco@unex.es, navarrogarmendia@unex.es}

\thanks{Keywords: Mazur--Ulam Theorem; ultrametric normed spaces; isometries}
\thanks{Mathematics Subject Classification: 46S10, 26E30}

\begin{abstract} 
We show that the group of isometries of an ultrametric normed space can be seen 
as a kind of a fractal.  Then, we apply this description to study ultrametric 
counterparts of some classical problems in Archimedean analysis, such as the so 
called {\it Problème des rotations de Mazur} or Tingley's problem.

In particular, it turns out that, in contrast with the case of real normed spaces, 
isometries between ultrametric normed spaces can be very far from being linear.
\end{abstract}

\maketitle


\section{Introduction}

The study of isometries dates back to the classification of them as translation, 
reflections and rotations in the two-dimensional Euclidean space and comprises a 
huge variety of results, that include one of the most ancient and celebrated 
results in the theory of Banach spaces: the Mazur--Ulam Theorem states that 
every onto isometry between real Banach spaces is affine. 

In \cite{NA-Moslehian}, there was an attempt to prove an ultrametric version of the 
Mazur--Ulam Theorem introducing the notion of non-Archimedean strictly convex space. 
Nevertheless, Professor A.~Kubzdela observed  some  years later that non-Archimedean 
strictly convex spaces are a {\em rarity} (see~\cite{Kubzdela}). 

Quite recently,  the authors of the present work have shown that any 
attempt to obtain ultrametric versions of the Mazur--Ulam Theorem via strictly 
convex spaces is doomed to fail  (see~\cite{JCSJNGNLA}).

The aim of this paper is to go a little further and analyse to which extent the 
behaviour of isometries in ultrametric analysis is far from their behaviour 
in real analysis.  Our main result is Theorem \ref{Fractal},  whose proof is elementary, and that exhibits a fractal structure of the group of isometries of an ultrametric normed space.

As a consequence, we prove in Corollary \ref{CorolarioMU} that the only ultrametric normed spaces that possess a Mazur--Ulam like property are the trivial examples whose bijections are always affine; namely, the one-dimensional normed spaces over 
$\Z/2\Z$ or $\Z/3\Z$, and the two-dimensional normed spaces over $\Z/2\Z$.

\subsection{Preliminaries}

The first definitions are a commonplace in ultrametric analysis, and we reflect 
them just for the sake of completeness, see \cite{NA-Moslehian}. 

\begin{definition}\label{defNAF}
A {\em non-Archimedean} (or {\em ultrametric}) {\em field} is a field $\K$ equipped with a function $|\cdot|:\K\to[0,\infty)$ such that 
\begin{enumerate}
\item[i)] $|\lambda | = 0$ if and only if $\lambda = 0$, 
\item[ii)] $|\lambda \mu| = |\lambda| |\mu|$, 
\item[iii)] $|\lambda + \mu | \leq max\{|\lambda |, |\mu|\}$ for all $ \lambda , \mu \in \K$. 
\end{enumerate}
\end{definition}

\begin{definition}\label{defNAN}
An {\em ultrametric normed space} is a linear space $X$ over a non-Archimedean 
field $(\K , |\cdot|)$ that is endowed with an ultrametric norm; that is to 
say, endowed with a function $\norma:X\to[0,\infty)$ such that 
\begin{enumerate}
\item[iv)] $\|x\| = 0$ if and only if $x = 0$, 
\item[v)] $\|\lambda  x\| = |\lambda |\|x\|$,  for all $\lambda \in \K, x\in X$.     
\item[vi)] $\|x + y\| \leq max\{\|x\|,\|y\|\}$ for all $x,y\in X$.     
\end{enumerate}
\end{definition}

Definition~\ref{defNAN} endows $X$ with a structure of metric 
space, thus allowing us to study the isometries between (subsets of) ultrametric normed spaces: 

\begin{definition} Let $A \subseteq X$ and $B \subseteq Y$ be subsets of two ultrametric spaces $X, Y$. A map $\,f \colon A \to B\,$ 
is an {\em isometry} if it is bijective and, for any $x,y \in A$,
$$\|f(y)-f(x)\|=\|y-x\|.$$ 
\end{definition}

The inversion through the origin, $x \mapsto -x$, is always an isometry $X \to X$, as so they are translations by a vector $\, x \mapsto x + z\,$.

\medskip
Finally, let us also recall the following well-known result:

\begin{lemma}[\cite{schikhoff}, 8.C]\label{Isosceles} 
On an ultrametric normed space $X$, every triangle 
is {\em isosceles in the big}, i.e., for every triplet of points 
$x, y, z\in X$, $\|x-z\|<\|y-z\|$ implies $\|y-x\|=\|y-z\|$. 
\end{lemma}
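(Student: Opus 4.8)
The plan is to deduce the statement from the single general principle that, in any ultrametric normed space, two vectors of different norm cannot partially cancel: if $a,b\in X$ with $\|a\|\neq\|b\|$, then $\|a+b\|=\max\{\|a\|,\|b\|\}$. First I would establish this principle. One inequality, $\|a+b\|\le\max\{\|a\|,\|b\|\}$, is exactly axiom (vi). For the reverse, assume without loss of generality $\|a\|>\|b\|$; writing $a=(a+b)+(-b)$ and using (vi) together with $\|-b\|=\|b\|$ (which follows from (v) with $\lambda=-1$, noting $|-1|=1$ since $(-1)^2=1$ in $\K$ by (ii)), one gets $\|a\|\le\max\{\|a+b\|,\|b\|\}$. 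Because $\|a\|>\|b\|$, the maximum on the right must be $\|a+b\|$, hence $\|a\|\le\|a+b\|$, and the two inequalities give the claimed equality.

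Then I would apply this to the triangle. Given $x,y,z\in X$ with $\|x-z\|<\|y-z\|$, set $a=y-z$ and $b=z-x$, so that $a+b=y-x$ and $\|b\|=\|z-x\|=\|x-z\|<\|y-z\|=\|a\|$. The principle immediately yields $\|y-x\|=\|a+b\|=\max\{\|a\|,\|b\|\}=\|y-z\|$, which is the assertion.

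I do not expect any genuine obstacle: the statement is elementary, and the only point requiring a little care is choosing the correct decomposition of the vector $y-x$, namely grouping it as $(y-z)+(z-x)$ rather than in a way that makes the strict-inequality hypothesis unusable. Should one prefer to avoid the auxiliary principle, the same computation can be carried out directly: on the one hand $\|y-x\|\le\max\{\|y-z\|,\|x-z\|\}=\|y-z\|$ by (vi); on the other hand $\|y-z\|\le\max\{\|y-x\|,\|x-z\|\}$, and since $\|x-z\|<\|y-z\|$ this forces $\|y-z\|\le\|y-x\|$. Combining the two inequalities gives $\|y-x\|=\|y-z\|$.
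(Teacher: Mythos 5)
Your proof is correct. The paper itself gives no proof of this lemma --- it is stated as a known result with a citation to Schikhof --- and your argument (bound $\|y-x\|$ above by axiom (vi) applied to $(y-z)+(z-x)$, then bound it below by applying (vi) to $(y-x)+(x-z)$ and using the strict inequality to rule out $\|x-z\|$ as the maximum) is exactly the standard one from the cited source, including the small check that $|-1|=1$ so that $\|-b\|=\|b\|$.
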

%
%

\section{Fractality of isometries}

Let $X$ be an ultrametric normed space. Let us denote the open ball and the sphere of radius $r \in (0 , \infty)$ as
$$B(x,r):=\{y\in X\colon\|y-x\|<r\}\quad,\quad S_X(r):=\{x\in X:\|x\|= r\}\ , $$ 
and let us also convene that 
$$B(x, \infty) := X \, . $$

\begin{proposition}\label{SecondStatement} Let $r \in (0,\infty]$, and consider an arbitrary family:
$$ f_{r'} \colon S_X (r') \to S_X(r') \, $$
of isometries on each sphere $S_X(r')$, for $r' \in (0, r)$.

Then, the map:
$$ f \colon B(0,r) \longrightarrow B(0,r) \quad , \quad  f(x) := \left\{
\begin{array}{c l}
f_{r'}(x) , & \text{ if } x \in S_X(r') \\
0, & \text{ if } x = 0 
\end{array}
\right. $$  is a centred isometry (i.e., an isometry such that $f(0) = 0$).
 \end{proposition}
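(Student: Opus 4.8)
The plan is to verify that $f$ is a well-defined bijection of $B(0,r)$ that preserves distances. Since $X$ is ultrametric, every nonzero $x \in B(0,r)$ lies on exactly one sphere $S_X(\|x\|)$ with $\|x\| < r$, so $f$ is unambiguously defined; it sends $0$ to $0$ and each $S_X(r')$ bijectively to itself (as $f_{r'}$ is an isometry, hence a bijection), so $f$ is a bijection of $B(0,r)$ onto itself. It thus remains to check $\|f(y)-f(x)\| = \|y-x\|$ for all $x,y \in B(0,r)$. The case $x = 0$ (or $y = 0$) is immediate, because then $\|f(y)-f(x)\| = \|f(y)\| = \|y\|$, using that $f_{r'}$ maps $S_X(r')$ into $S_X(r')$ and so preserves norms.

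So assume $x,y$ are both nonzero, say $x \in S_X(r_1)$ and $y \in S_X(r_2)$ with $r_1, r_2 \in (0,r)$. The key split is according to whether $r_1 = r_2$ or not. If $r_1 = r_2 =: r'$, then $f(x) = f_{r'}(x)$ and $f(y) = f_{r'}(y)$ both come from the same isometry $f_{r'}$ of $S_X(r')$, so $\|f(y)-f(x)\| = \|y-x\|$ directly. If $r_1 \neq r_2$, say $r_1 < r_2$, then $\|x\| < \|y\|$, i.e. $\|x - 0\| < \|y - 0\|$, and Lemma~\ref{Isosceles} (isosceles in the big) gives $\|y - x\| = \|y\| = r_2$. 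On the other side, $\|f(x)\| = r_1$ and $\|f(y)\| = r_2$ since the $f_{r'}$ preserve spheres, so $\|f(x)\| < \|f(y)\|$, and applying Lemma~\ref{Isosceles} again yields $\|f(y)-f(x)\| = \|f(y)\| = r_2 = \|y-x\|$. This covers all cases.

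The only point requiring care — and the one I would flag as the main (very mild) obstacle — is recognizing that the distance between points on two \emph{different} spheres is forced to equal the larger radius, so the individual isometries $f_{r'}$ cannot interfere with one another across spheres; this is exactly what Lemma~\ref{Isosceles} delivers, and it is the ultrametric phenomenon that makes the statement true with no compatibility hypothesis whatsoever on the family $\{f_{r'}\}$. Everything else is bookkeeping: well-definedness from uniqueness of the sphere through a nonzero point, surjectivity and injectivity from those of each $f_{r'}$, and norm-preservation of each $f_{r'}$ from $f_{r'}(S_X(r')) \subseteq S_X(r')$.
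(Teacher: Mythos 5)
Your proof is correct and follows essentially the same route as the paper's: split into the cases of equal and unequal norms, use the given $f_{r'}$ on a common sphere, and use Lemma~\ref{Isosceles} to force $\|y-x\|=\max\{\|x\|,\|y\|\}$ on both sides when the norms differ. You are somewhat more explicit than the paper about well-definedness, bijectivity, and the invocation of the isosceles lemma, but the substance is identical.
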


\begin{proof} Observe that $\| x \| = \| f(x) \|$, for any $x \in B(0,r)$. 

Thus, for any pair of points $x,y\in B(0,r)$,
\begin{itemize}
\item if $\| x \| = \|y \| = r'$, then $$\|y - x \| = \| f_{r'}(y)-f_{r'}(x)\| = \| f(y) - f(x) \|  \ . $$

\item if $\|x\|<\|y\|$, then $\|f(x)\|<\|f(y)\|$ and 
$$ \|f(y)-f(x)\|=\|f(y)\|=\|y\|=\|y-x\|.$$

\item if $\|x\|>\|y\|$, then $\|f(x)\|>\|f(y)\|$ and 
$$ \|f(y)-f(x)\|=\|f(x)\|=\|x\|=\|y - x\|.$$
\end{itemize} \end{proof}

\medskip
The following conditions define equivalence relations on $X$, for any $r \in (0, \infty ]$,
$$ x \sim_r y \ : \Leftrightarrow \ \|y - x\| < r \ . $$

The equivalence classes for these relations are precisely the open balls $ B(x,r)$. Therefore,  an ultrametric normed space $\,X$, as well as any other subset $\,A \subset X$, canonically decomposes as a disjoint union of open balls of any fixed radius $r$.

In particular, if we write $S/\coc_r\,$ to abbreviate the quotient space $\,S_X(r)/\coc_r$,  and we denote 
$$ B_{\bar{x}} (r) := B(x,r) \ , \ \mbox{where $x \in X$ is any representative of $\bar{x} \in S/\coc_r $} \, , $$
any sphere decomposes as a disjoint union of open balls:
\begin{equation}\label{DescomposicionS}
S_X(r) \, = \, \bigsqcup_{\bar{x} \in S/\sim_r} B_{\bar{x}}( r) \ .
\end{equation}

\begin{proposition}\label{ThirdStatement} Let $r \in (0,\infty)$ be a positive radius.

If $\,\sigma$ is a permutation of the quotient set $S/\!\coc_r$ and 
$\,\{ \varphi_{\bar{x}} \colon B_{\bar{x}}(r) \to B_{\sigma( \bar{x})} (r)\}_{\bar{x} \in S/\sim_r}\,$ 
is an arbitrary family of isometries, then the disjoint union of these isometries
$$ S_X(r) \ \xrightarrow{\ \ f \ \ } \  S_X(r)
\quad , \quad  f(x) := \varphi_{\bar{x}} (x) \ ,  $$  
is an isometry of the sphere $S_X(r)$.
 \end{proposition}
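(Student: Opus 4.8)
The plan is to verify the three ingredients that make $f$ an isometry of $S_X(r)$: that it is well defined, that it is a bijection, and that it preserves distances. Well-definedness is immediate from the decomposition \eqref{DescomposicionS}: the balls $\{B_{\bar{x}}(r)\}_{\bar{x}\in S/\sim_r}$ form a partition of $S_X(r)$, so every $x\in S_X(r)$ belongs to exactly one class $B_{\bar{x}}(r)$ and the prescription $f(x):=\varphi_{\bar{x}}(x)$ is unambiguous; moreover $f(x)\in B_{\sigma(\bar{x})}(r)\subseteq S_X(r)$, so $f$ indeed takes values in $S_X(r)$.

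For bijectivity I would use that $\sigma$ is a \emph{permutation} of $S/\sim_r$, so the family $\{B_{\sigma(\bar{x})}(r)\}_{\bar{x}}$ is nothing but the partition \eqref{DescomposicionS} reindexed. Since each $\varphi_{\bar{x}}$ is a bijection of $B_{\bar{x}}(r)$ onto $B_{\sigma(\bar{x})}(r)$, glueing the inverses $\varphi_{\sigma^{-1}(\bar{y})}^{-1}\colon B_{\bar{y}}(r)\to B_{\sigma^{-1}(\bar{y})}(r)$ produces a two-sided inverse for $f$.

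The distance computation splits into two cases according to whether two points $x,y\in S_X(r)$ lie in the same ball of the partition. If $\|y-x\|<r$, then $x\sim_r y$, so $x$ and $y$ share a common ball $B_{\bar{x}}(r)$ and $\|f(y)-f(x)\|=\|\varphi_{\bar{x}}(y)-\varphi_{\bar{x}}(x)\|=\|y-x\|$ because $\varphi_{\bar{x}}$ is an isometry. If instead $x$ and $y$ lie in distinct balls $B_{\bar{x}}(r)\neq B_{\bar{y}}(r)$, then $\|y-x\|\geq r$; but $\|x\|=\|y\|=r$ forces $\|y-x\|\leq\max\{\|x\|,\|y\|\}=r$ by the ultrametric inequality (vi), whence $\|y-x\|=r$. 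Now $f(x)\in B_{\sigma(\bar{x})}(r)$ and $f(y)\in B_{\sigma(\bar{y})}(r)$ with $\sigma(\bar{x})\neq\sigma(\bar{y})$ since $\sigma$ is a permutation, so $f(x)$ and $f(y)$ lie in distinct balls of $S_X(r)$; applying the very same argument to $f(x),f(y)\in S_X(r)$ yields $\|f(y)-f(x)\|=r=\|y-x\|$.

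I do not anticipate a genuine obstacle: the proposition is essentially a bookkeeping statement, and the only substantive input is the ultrametric observation that two points of one and the same sphere that lie in different $r$-balls are at distance exactly $r$ --- which combines the ultrametric inequality with the identification of $\sim_r$-classes as the balls $B_{\bar{x}}(r)$, together with the fact (a consequence of Lemma \ref{Isosceles}) that $B(x,r)\subseteq S_X(r)$ when $\|x\|=r$. The only point demanding a little care is to keep the reindexing by $\sigma$ straight, so that bijectivity of $f$ is deduced from $\sigma$ being a bijection of $S/\sim_r$ rather than asserted circularly.
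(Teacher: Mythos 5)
Your proof is correct and follows essentially the same route as the paper's: the same case split on whether $\|y-x\|<r$ or $\|y-x\|=r$, with the same key observation that two points of $S_X(r)$ in distinct balls of the partition \eqref{DescomposicionS} are at distance exactly $r$. You simply make explicit the well-definedness and bijectivity bookkeeping that the paper leaves implicit, and you derive $\|y-x\|\leq r$ directly from the ultrametric inequality rather than citing Lemma \ref{Isosceles}; both are fine.
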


\begin{proof} Any two points of the sphere $\, x , y \in S_X(r)\,$ satisfy $\| y - x \| \leq r$ (because of Lemma \ref{Isosceles}), so that the distance between points $x,y \in S_X(r)$ in different open balls of (\ref{DescomposicionS}) is exactly $r$.

Therefore, for any pair of points $x, y \in S_X (r)$, 
\begin{itemize}
\item if $\| y - x \| < r$, then $\bar{x} = \bar{y}$ and
$$\| f(y) - f(x) \|  = \|  \varphi_{\bar{x}} (y) - \varphi_{\bar{x}} (x) \| = \| y - x \| \ . $$

\item if $\| y - x\| = r$, then $\bar{x} \neq \bar{y}$.  Hence, $\sigma (\bar{x}) \neq \sigma ( \bar{y})$, so that $B_{\sigma (\bar{x})} (r) \neq B_{\sigma (\bar{y})}(r)$ and, in particular, $f(x)$ and $f(y)$ lie in different open balls of (\ref{DescomposicionS}); that is to say, $\| f(y) - f(x) \|  =  r$.
\end{itemize}
\end{proof}









We can summarize these results as follows:

\medskip
\begin{theorem}\label{Fractal}
The following statements hold:
\begin{enumerate}
\item (Isometries between balls) Let $B_1$ and $B_2$ be open balls with the same (possibly infinite) radius $r\in (0,\infty]$. A map $f\colon B_1 \to B_2$ is an isometry if and only if there exist translations $\tau , \tau'$ and a centred isometry $f_c\colon B(0,r) \to B(0,r)$ such that
$$ f \, = \, \tau' \circ f_c \circ \tau \ . $$

\item (Centred isometries between balls) A centred map $\,f \colon B(0,r) \to B(0,r) \,$, with $r \in (0,\infty]$, is an isometry if and only if 
\begin{enumerate}
\item It preserves norms: $\| f(x) \| = \| x \|$, for any $x\in B(0,r)$.

\item It is an isometry on each sphere: 
$$ f \colon S_X (r') \to S_X(r') \ \mbox{ is an isometry, for any } r' \in (0, r). $$
\end{enumerate}

\item (Isometries between spheres) A map $\,f\colon S_X(r) \to S_X(r)\,$, with $r \in (0, \infty)$, is an isometry if and only if there exist a permutation $\,\sigma$ of the quotient set $S/\coc_r$ and isometries $\,\varphi_{\bar{x}} \colon B_{\bar{x}}( r) \to B_{\sigma(\bar{x})}(r)\,$, for $\bar{x} \in S/\coc_r$, such that $f$ is the disjoint union of the $\varphi_{\bar{x}}$:
$$ S_X(r) = \bigsqcup_{\bar{x} \in S/\sim_r} B_{\bar{x}}(r) \ \xrightarrow{\ \ f = \sqcup\, \varphi_{\bar{x}}\ \ } \ \bigsqcup_{\bar{x} \in S / \sim_r} B_{\bar{x}}(r ) = S_X(r) \ . $$

\end{enumerate}
\end{theorem}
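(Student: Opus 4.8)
The plan is to assemble Theorem \ref{Fractal} directly from the three results already established, checking that each "if and only if" follows either from a previously proved proposition or from an elementary argument about ultrametric distances. I would treat the three parts in the order (2), (3), (1), since part (2) packages up Propositions \ref{SecondStatement} and \ref{ThirdStatement}, and part (1) builds on part (2) by transporting an arbitrary isometry between balls to a centred one via translations.

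For part (2): the forward implication is immediate, since an isometry $f\colon B(0,r)\to B(0,r)$ with $f(0)=0$ satisfies $\|f(x)\|=\|f(x)-f(0)\|=\|x-0\|=\|x\|$, which is (a), and restricting $f$ to a sphere $S_X(r')$ with $r'<r$ gives a bijection onto $S_X(r')$ (by norm preservation, using that $f$ is onto $B(0,r)$) that preserves distances, which is (b). The converse is essentially Proposition \ref{SecondStatement}: given the family of sphere isometries $f_{r'}$ for $r'\in(0,r)$ together with the value $f(0)=0$, that proposition says the glued map is a centred isometry; I would just note that the hypotheses (a) and (b) are exactly the data $\{f_{r'}\}$ plus $f(0)=0$, so the proposition applies verbatim.

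For part (3): the converse direction is precisely Proposition \ref{ThirdStatement}. The forward direction is the one genuinely new piece of reasoning, and I expect it to be the main (minor) obstacle: given an arbitrary isometry $f\colon S_X(r)\to S_X(r)$, I must show it respects the ball decomposition (\ref{DescomposicionS}). The key is that $x\sim_r y$ on the sphere is equivalent to $\|y-x\|<r$, while points in distinct balls are at distance exactly $r$ (as recalled in the proof of Proposition \ref{ThirdStatement}, via Lemma \ref{Isosceles}). Since $f$ preserves distances, $\|y-x\|<r \iff \|f(y)-f(x)\|<r$, so $f$ maps each equivalence class $B_{\bar x}(r)$ \emph{into} a single class, and by the same argument applied to $f^{-1}$ it maps it \emph{onto} a class $B_{\sigma(\bar x)}(r)$; bijectivity of $f$ forces $\sigma$ to be a permutation of $S/\!\coc_r$. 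Setting $\varphi_{\bar x}:=f|_{B_{\bar x}(r)}$ gives isometries of the required form whose disjoint union is $f$.

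For part (1): the converse is routine, since translations are isometries and a composition of isometries is an isometry, so $\tau'\circ f_c\circ\tau$ is an isometry whenever $f_c$ is. For the forward direction, given an isometry $f\colon B_1\to B_2$, write $B_1=B(z_1,r)$ and $B_2=B(z_2,r)$ and set $\tau(x):=x-z_1$ (mapping $B_1\to B(0,r)$) and $\tau'(y):=y+z_2$ (mapping $B(0,r)\to B_2$); then $f_c:=(\tau')^{-1}\circ f\circ\tau^{-1}$ is an isometry $B(0,r)\to B(0,r)$, and the only point to verify is $f_c(0)=0$, i.e.\ $f(z_1)=z_2$ — but $f(z_1)\in B_2=B(z_2,r)$ just means $\|f(z_1)-z_2\|<r$, which is not enough, so instead I would redefine $\tau'$ as translation by $f(z_1)$ rather than by $z_2$: since $\|f(z_1)-z_2\|<r$, translation by $f(z_1)$ still carries $B(0,r)$ onto $B(z_2,r)=B_2$ (open balls of radius $r$ are translation-invariant under shifts by vectors of norm $<r$), and now $f_c(0)=0$ by construction. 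With the centring arranged, part (2) then describes $f_c$ completely, so no further work is needed. Finally I would remark that the statement indeed "summarizes" the earlier propositions, closing the loop.
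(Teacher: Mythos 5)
Your proposal is correct and follows essentially the same route as the paper: part (1) is obtained by translating by an arbitrary point $x\in B_1$ and by $f(x)$ (the paper's commutative square is exactly your corrected choice of $\tau'$), and parts (2) and (3) are read off from Propositions \ref{SecondStatement} and \ref{ThirdStatement}. The only difference is that you spell out the forward (``only if'') directions of (2) and (3), which the paper dismisses as mere reformulations; your arguments there (norm preservation from $f(0)=0$, and $f$ respecting the ball decomposition because $\|y-x\|<r\iff\|f(y)-f(x)\|<r$) are exactly the intended ones.
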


\begin{proof} For the first statement, assume $f \colon B_1 \to B_2$ is an isometry between open balls of the same radius $r \in (0, \infty]$. 

For {\it any} pair of points $x \in B_1$ and $y \in B_2$, the translation by the vector $y-x$ is an isometry $ \tau_{y-x} \colon B_1 \to B_2$: in fact,  for any $z \in B_1$, $\| \tau_{y-x} (z) - y \| = \| z - x \| < r$, so that $\tau_{y-x} (z) \in B_2$.

Therefore, if we take any point $x \in B_1$,  the translations $\tau_{-x} \colon B_1 \to B(0,r)$ and $\tau_{f(x)} \colon B (0, r) \to B_2$ are well-defined, and $f_c$ is the unique isometry that makes the square commutative:
$$
\xymatrix{B_1 \ar[rr]^-{f}_{\sim} \ar[d]_{\tau_{-x}} & & B_2 \\ 
B(0,r) \ar@{.>}[rr]^-{f_c} & & B(0,r) \ar[u]_{\tau_{f(x)}}. }
$$

\medskip
Finally, both the second and third statements are reformulations of Proposition \ref{SecondStatement} and Proposition \ref{ThirdStatement}, respectively.
\end{proof}

\begin{example}\label{descripcion}
In order to give an explanation of the name of this Section, let us describe 
the group of centred autoisometries of a quite simple space as $X=\Q^2_3$ when 
we consider the following valuation and norm: 
$$\left|3^n\frac ab\right|=3^{-n};\quad \|(\lambda,\mu)\|=\max\{|\lambda|,|\mu|\}, $$
for every $n, a, b\in\Z$ such that $a$ and $b$ are coprime and $\lambda,\mu\in\Q_3$. 

The norm $\norma$ only takes values in $\{0\}\cup\{3^n:n\in\Z\}$, so the second 
part of Theorem~\ref{Fractal} implies that we only need to study the autoisometries 
$\tau_n:S_X(3^n)\to S_X(3^n)$ for each $n\in\Z$. Later, we can glue these isometries to 
obtain $\tau:X\to X$ defined as $\tau(x)=\tau_n(x)$ when $\|x\|=3^n$, $\tau(0)=0$. 

It is clear that any isometry $\tau_n:S_X(3^n)\to S_X(3^n)$ is associated with 
$\tau_0:S_X\to S_X$ via the dilations $S_X(3^n)\to S_X$ and 
$S_X\to S_X(3^n)$, so the structure of $\Iso_X$ is determined by the 
estructure of $\Iso_{S_X}$: we only need to find one isometry $S_X\to S_X$ for 
each integer $n$ and then glue them all in their corresponding spheres $S_X(3^n)$. 
But we can also take some bijection $\Z\to\N\cup\{0\}$, like the one defined as 
$n\mapsto 2n,\ \ -n\mapsto (-2n-1)$ for every $n\in\N$ and $0\mapsto 0$. This 
way, we have a correspondence between $\Iso_X$ and $\Iso_{B_X}$ thanks to 
$\Iso_{S_X}$. So, let us analyse what happens with $\Iso_{S_X}$. We can 
decompose $S_X$ as in (\ref{DescomposicionS}), but we have the nice feature 
that $B_X(x,1)=B_X[x,1/3]$ for every $x\in X$ --because $\|y\|<1$ is equivalent 
to $\|y\|\leq 1/3$-- and in particular we get 
\begin{equation}\label{ArtDeco}
S_X=\bigsqcup_{\bar{x} \in S/\sim_1} B_X[x,1/3].
\end{equation}
The decomposition given in (\ref{ArtDeco}) is much more simple than it seems. 
Namely, as the space that we are dealing with is $X=\Q^2_3$, the quotient 
$S/\coc_1$ contains only a finite number $k$ of equivalence classes. 
So, we only need to choose some permutation $\sigma\in S_k$ ($S_k$ 
stands for the symmetric group) and $k$ isometries 
$\varphi_i:B_X[0,1/3]\to B_X[0,1/3]$. Of course, the structure of 
$\Iso_{B_X[0,1/3]}$ and the one of $\Iso_{B_X}$ are the same, so we can restrict ourselves 
to the study of $\Iso_{B_X}$\ldots
\end{example}


\section{Some consequences}

\subsection{Isotropy of ultrametric spaces} 

One of the most famous open problems in functional analysis is the following: 
\begin{note}[Problème des rotations de Mazur]
Let $X$ be a separable real or complex Banach space and suppose that, for every 
$x,y\in S_X$ there is an onto isometry $f:X\to X$ such that $f(x)=y$. 
Does this imply that $X$ is an inner product space? 

This was solved in the positive for finite-dimensional spaces and in the negative 
for non-separable spaces, but it is still open in the above form. 
\end{note}

In the ultrametric setting, though, the group of isometries always acts 
transitively on the spheres: 

\begin{corollary} Let $X$ be an ultrametric normed space. For any pair of 
vectors $x,y \in X$ with the same norm, there 
exists a centred isometry $f \colon X \to X$ such that $f(x) = y$.
\end{corollary}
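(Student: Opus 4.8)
The plan is to invoke the structure theorem (Theorem \ref{Fractal}) and reduce the statement to an elementary observation about how points of the same norm sit inside a common sphere. Given $x, y \in X$ with $\|x\| = \|y\| = r'$ for some $r' \in (0, \infty)$, both points live on the sphere $S_X(r')$, which by (\ref{DescomposicionS}) decomposes as a disjoint union of open balls $B_{\bar z}(r')$ indexed by $\bar z \in S/\!\coc_{r'}$. I would first handle the case $x = y$ trivially (take $f = \mathrm{id}$), so assume $x \neq y$; note that if $\|y - x\| < r'$ then $x$ and $y$ lie in the \emph{same} open ball $B_{\bar x}(r')$, while if $\|y-x\| = r'$ they lie in \emph{different} balls $B_{\bar x}(r') \neq B_{\bar y}(r')$.

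Next I would construct an isometry of the sphere $S_X(r')$ carrying $x$ to $y$ using part (3) of Theorem \ref{Fractal}. If $\bar x = \bar y$, take $\sigma = \mathrm{id}$ on $S/\!\coc_{r'}$, let $\varphi_{\bar x} \colon B_{\bar x}(r') \to B_{\bar x}(r')$ be the restriction of the translation $\tau_{y-x}$ (which is a well-defined self-isometry of the ball of radius $r'$ by the argument in the proof of Theorem \ref{Fractal}(1)), and let $\varphi_{\bar z} = \mathrm{id}$ for all other classes. If $\bar x \neq \bar y$, take $\sigma$ to be the transposition of $\bar x$ and $\bar y$; pick representatives and use the translation $\tau_{y-x} \colon B_{\bar x}(r') \to B_{\bar y}(r')$ together with $\tau_{x-y} \colon B_{\bar y}(r') \to B_{\bar x}(r')$ as the corresponding $\varphi$'s, and the identity elsewhere. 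Either way, part (3) guarantees the resulting disjoint union $g \colon S_X(r') \to S_X(r')$ is an isometry, and by construction $g(x) = y$.

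Finally I would promote this to a centred isometry of all of $X$ by gluing with identities on the remaining spheres. Define $f \colon X \to X$ by $f = g$ on $S_X(r')$, $f = \mathrm{id}$ on $S_X(r'')$ for every $r'' \neq r'$, and $f(0) = 0$. By part (2) of Theorem \ref{Fractal} this $f$ is a centred isometry: it preserves norms (since $g$ maps $S_X(r')$ to itself and each identity fixes its sphere) and it restricts to an isometry on every sphere. Then $f(x) = g(x) = y$, as desired.

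There is no real obstacle here; the one point deserving a word of care is the well-definedness of the translation maps between open balls of radius $r'$, i.e.\ that $\tau_{y-x}$ indeed sends $B_{\bar x}(r')$ into $B_{\bar y}(r')$ bijectively — but this is exactly the computation $\|\tau_{y-x}(z) - y\| = \|z - x\| < r'$ already carried out in the proof of Theorem \ref{Fractal}(1), so it can simply be cited. The rest is bookkeeping with the equivalence relation $\coc_{r'}$ and the two machinery results.
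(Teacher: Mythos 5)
Your proof is correct and follows essentially the same route as the paper: construct an isometry of the sphere $S_X(r')$ sending $x$ to $y$, then glue it with the identity on every other sphere via Proposition~\ref{SecondStatement}. The only difference is that you spell out the transitivity step (translations between the balls $B_{\bar{x}}(r')$ and $B_{\bar{y}}(r')$ together with a transposition of classes in $S/\!\coc_{r'}$), which the paper leaves implicit when it cites Proposition~\ref{ThirdStatement}.
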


\begin{proof}
For any $x,y \in S_X(r)$, there exists an isometry $\varphi \colon S_X(r) 
\to S_X(r)$ such that $\varphi(x) = y$ (Proposition \ref{ThirdStatement}).

A global isometry is a gluing of isometries on each sphere 
(Proposition~\ref{SecondStatement}) so we are done.
\end{proof}

\begin{remark}
We distinguish {\em non-Archimedean} from {\em ultrametric}, although both terminologies are sometimes 
seen as equivalent. But they do not need to be. Actually, the definition of normed space over a valuated field 
that the reader can find in Bourbaki's monumental work~\cite{Bourbaki} does not 
involve the ultrametric inequality. Both concepts {\em are} equivalent when 
one refers to valuated fields (because a multiplicative norm $|\cdot|:\K\to[0,\infty)$ satisfies the ultrametric inequality  if and only if the image of the natural map $\mathbb{N} \to \K $ is bounded, [\cite{schikhoff}, 8.2]),  but we have found no reason whatsoever 
to limit the non-Archimedean norms to ultrametric norms. 

In fact, recent work (\cite{JCSFCF1}) points out that it may be  interesting to study the structure that usual (not ultrametric) norms give to linear spaces 
over non-Archimedean fields. 
\end{remark}

\subsection{Tingley's Problem} 

Due to the Mazur--Ulam Theorem and Mankiewicz's Theorem, that states that 
every onto isometry $\tau:B_X\to B_Y$ between the unit balls of two real Banach 
spaces extends to a linear isometry $\widetilde\tau:X\to Y$, it stands to reason 
to wonder whether every onto isometry $\tau:S_X\to S_Y$ between the unit spheres 
of real Banach spaces extends to a linear isometry, too. This is known as 
Tingley's Problem, and is receiving a lot of attention nowadays, see, e.g., 
\cite{JCSRefJMAA, PeraltaSurvey} and the question can be generalised to 
{\em does every onto isometry between the boundaries of open convex subsets 
extend to a linear isometry between the spaces?}, see\cite{JCSJordan}; 
and to finding minimal requirements on the subset where the isometry is 
defined, see~\cite{CuetoPeraltaJB}. 
One key in this question is that $\tau$ extends linearly if and only if 
it extends to an isometry, so we have that the question is equivalent, 
in real Banach spaces, to any of the following:
\begin{itemize}
\item[1] Does every onto isometry $\tau:S_X\to S_Y$ extend to an isometry
$\widetilde\tau:X\to Y$?
\item[2] Does every onto isometry $\tau:S_X\to S_Y$ extend to a linear map 
$\widetilde\tau:X\to Y$?
\end{itemize}
In ultrametric normed spaces, it is clear that the answer to the first question 
is {\em yes, with conditions} and the answer to the second one is {\em not even close}. 
Please be aware that, for the question to make sense we need to choose 
a nonempty sphere instead of the unit sphere. 

\begin{proposition}\label{Tingley}
Let $\XnX$ and $\YnY$ be ultrametric normed spaces over some valuated field 
$\K$, $r,r'>0$ such that $rS_X\neq\emptyset$ and $\tau:rS_X\to r'S_Y$ an 
isometry. If the valuation is not trivial, then $r'=r$ and $\tau$ extends to 
an isometry $\widetilde\tau:X\to Y$. 
\end{proposition}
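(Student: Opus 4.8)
The plan is to reduce the statement to the fractal description of Theorem~\ref{Fractal}, after first pinning down the value $r'$. First I would show $r' = r$. Since the valuation on $\K$ is non-trivial, pick $\lambda \in \K$ with $0 < |\lambda| < 1$. Fix $x_0 \in rS_X$ and set $y_0 = \tau(x_0) \in r'S_Y$. For any $n \in \Z$ the point $\lambda^n x_0$ lies in $X$ with norm $|\lambda|^n r$, so the set of distances $\{\|\lambda^n x_0 - x_0\| : n \in \Z\}$ is exactly $\{r\} \cup \{|\lambda|^n r : n < 0\}$, which is unbounded and has infimum accumulating at… — more cleanly: the set of distances realised \emph{within} $rS_X$ from $x_0$ is $\{\,s : 0 < s \le r,\ s \in |\K^\times|\cdot r\ \text{or}\ s=r\,\}$; in particular it has supremum $r$ and this supremum is attained (take any $z\in rS_X$ with $\|z-x_0\|=r$, which exists as soon as $rS_X$ has at least two classes, and otherwise one argues directly). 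Actually the cleanest route: $\tau$ is an isometry, so $\mathrm{diam}(rS_X) = \mathrm{diam}(r'S_Y)$; by Lemma~\ref{Isosceles} every point of $rS_X$ has norm $r$ and any two points are at distance $\le r$, with distance exactly $r$ between distinct classes, while scaling by $\lambda$ shows distances $|\lambda|^k r$ for all $k \ge 0$ are also realised — so $\mathrm{diam}(rS_X) = r$ precisely, hence $\mathrm{diam}(r'S_Y) = r'$, giving $r = r'$. I expect this normalisation step, and in particular handling the degenerate case where a sphere is a single coset, to be the one requiring the most care.

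Once $r' = r$, I would extend $\tau \colon rS_X \to rS_Y$ to the closed ball, or rather to all of $X$, sphere by sphere. The key observation is that for each $t > 0$ the dilation $x \mapsto (t/r)\,\mu\, x$ — choosing, via non-triviality of the valuation together with the fact that $t, r \in \|X\| = \|Y\|$ lie in the value group times a common factor… — well, more carefully: the assumption that $rS_X \ne \emptyset$ and non-triviality let us conclude that $\|X\|$ is a subgroup-like subset of $(0,\infty)$ closed under multiplication by $|\K^\times|$, and similarly for $\|Y\|$; in fact $\tau$ being an isometry forces $\|X\| \cap (0,r] = \|Y\| \cap (0,r]$, and then closure under $|\K^\times|$ forces $\|X\| = \|Y\|$. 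For each $t \in \|X\| = \|Y\|$ with $t > 0$, pick $\lambda_t \in \K^\times$ with $|\lambda_t| = t/r$ whenever such exists; when $t/r \notin |\K^\times|$ one instead transports along any bijection between spheres as in Example~\ref{descripcion}. This produces isometries $\tau_t \colon tS_X \to tS_Y$ for every $t$ in the (common) value set of the norms.

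Finally I would glue: define $\widetilde\tau \colon X \to Y$ by $\widetilde\tau(0) = 0$ and $\widetilde\tau(x) = \tau_{\|x\|}(x)$ for $x \ne 0$, with $\tau_r = \tau$. By the second statement of Theorem~\ref{Fractal} — in the version of Proposition~\ref{SecondStatement}, applied with ``$r = \infty$'' so that the domain is all of $X$ — this gluing is automatically a centred isometry $X \to Y$, since $\widetilde\tau$ preserves norms by construction and restricts to an isometry on every sphere. This gives the desired extension $\widetilde\tau$. The main obstacle, as noted, is the bookkeeping around which norm values occur: showing $\|X\| = \|Y\|$ and that $r = r'$, and making sure the sphere-to-sphere transport maps exist for \emph{every} relevant radius (using genuine dilations where the value group permits, and arbitrary isometries of spheres, guaranteed by Proposition~\ref{ThirdStatement}, elsewhere). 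Triviality of the valuation is exactly what would break the $r' = r$ step, since then $\|X\|$ and $\|Y\|$ could be arbitrary unrelated subsets of $(0,\infty)$ and a sphere of radius $r$ in $X$ could be isometric to a sphere of radius $r' \ne r$ in $Y$.
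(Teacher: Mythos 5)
Your overall plan (pin down $r'$, then build sphere-by-sphere isometries and glue via Proposition~\ref{SecondStatement}) is reasonable, but both halves have genuine gaps. First, the normalisation step: your ``cleanest route'' rests on the claim that $\mathrm{diam}(rS_X)=r$. This is false in general. The distances you produce by scaling, namely $\|(1+\lambda^k)x_0-x_0\|=|\lambda|^k r$ with $|\lambda|<1$ and $k\ge 1$, are all at most $|\lambda|\,r<r$; the value $r$ itself is attained only when $rS_X$ meets at least two classes of $\sim_r$. When the value group is discrete and the sphere is a single class, the diameter is strictly smaller than $r$: already for $X=\Q_2$ one has $S_X(1)=1+2\Z_2$, whose diameter is $1/2$. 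So the case you defer with ``otherwise one argues directly'' is precisely the hard one, and it is not cosmetic --- in that configuration the sphere does not ``remember'' $r$ through its diameter, and deciding $r'=r$ requires a different argument (the paper itself does not spell this step out either, but since you made the diameter claim load-bearing, the gap is yours).

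Second, the extension step. To glue you need an isometry $tS_X\to tS_Y$ for \emph{every} relevant $t$, and your fallback for $t/r\notin|\K^\times|$ --- ``transport along any bijection between spheres'', ``arbitrary isometries of spheres, guaranteed by Proposition~\ref{ThirdStatement}'' --- does not produce one: Proposition~\ref{ThirdStatement} and Example~\ref{descripcion} only yield self-isometries of a sphere inside a single space; they say nothing about whether $S_X(t)$ and $S_Y(t)$ are isometric to one another. The device you are missing is the one the paper uses: by the ultrametric inequality the coset $B_X(x_0,r)$ is contained in $rS_X$, so $\tau$ restricts to an onto isometry $B_X(x_0,r)\to B_Y(\tau(x_0),r)$, and conjugating by the translations $\tau_{-x_0}$ and $\tau_{-\tau(x_0)}$ gives a \emph{centred} isometry $B_X(0,r)\to B_Y(0,r)$. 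That single map supplies isometries $S_X(t)\to S_Y(t)$ for all $t<r$ at once, with no case distinction on whether $t/r$ lies in $|\K^\times|$, and it replaces your $\|X\|=\|Y\|$ bookkeeping; gluing it with $\tau$ gives an isometry $B_X[0,r]\to B_Y[0,r]$ of closed balls, after which every larger radius is reached by $\widetilde\tau(x)=\alpha^n\widetilde\tau(x/\alpha^n)$ for one fixed $\alpha$ with $|\alpha|>1$ and $n$ large enough that $\|x\|/|\alpha|^n\le r$. With that in place your final gluing step is fine; without it, $\widetilde\tau$ is simply undefined on the spheres whose radius is not a $|\K^\times|$-multiple of $r$.
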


\begin{proof}
Let $x_0\in rS_X$. The ultrametric inequality implies that $B_X(x_0,r)\subseteq S_X(0,r)$. 
It is clear that $\tau:B_X(x_0,r)\to B_Y(\tau(x_0),r)$ is also an onto isometry, so 
we may compose $\tau$ with translations to get a (centred) isometry 
$\widetilde\tau:B_X(0,r)\to B_Y(0,r)$ defined as $\widetilde\tau(x)=\tau(x+x_0)-\tau(x_0)$. 
Gluing $\tau$ with $\widetilde\tau$ we obtain an isometry, which we will not rename, 
$$\widetilde\tau:B_X[0,r]\to B_Y[0,r].$$

Take some $\alpha\in\K$ such that $a=|\alpha|>1$ -- it exists because the 
valuation is not trivial. For any $x\in B_X[0,ra]\setminus B_X[0,r]$ 
(equivalently, $r<\|x\|_X\leq ra$), define 
$\widetilde\tau(x)=\alpha\tau(x/\alpha)$. It is clear that 
$\widetilde\tau:S_X(0,t)\to S_Y(0,t)$ is a well-defined onto isometry for 
every $t\in (r,ra]$. This implies that $\widetilde\tau:B_X[0,ra]\to B_Y[0,ra]$ 
is a centred onto isometry. The same way we can define 
$\widetilde\tau(x)=\alpha^n\tau(x/\alpha^n)$ whenever 
$x\in B_X[0,ra^n]\setminus B_X[0,ra^{n-1}]$, thus finishing the proof. 
\end{proof}

\begin{remark}
If the valuation of $\K$ is trivial, then we can find at least two pathological 
behaviours. Namely, if $(\Z/2\Z)^2$ is endowed with the ultrametric norms 
$$\|(a,b)\|_X=\max\{|a|,2|b|\}, \|(a,b)\|_Y=\max\{|a|,3|b|\}$$ 
then both unit spheres consist in the singleton $\{(1,0)\}$ but the isometry 
$\tau:S_X\to S_Y$ defined as $\tau(1,0)=(1,0)$ does not extend. The same happens 
if we define the same norm over any other two-dimensional space over a field 
whose valuation is trivial. 

Moreover, if we endow $(\Z/2\Z)^2$ with 
$$\|(a,b)\|_X=\max\{|a|,2|b|\}, \|(a,b)\|_Y=\max\{2|a|,3|b|\}$$ 
then both $S_X$ and $2S_Y$ contain exactly one point. This means that they are 
trivially isometric, but, with the notations of Proposition~\ref{Tingley}, we 
have $r'\neq r$. 
\end{remark}

\subsection{Non-existence of Mazur-Ulam type theorems}

\begin{example}
The map $x \to -x$ is always a centred isometry $X \to X$ and, for any fixed $x_0$, the 
map $x\mapsto x+x_0$ is an isometry of any sphere $S_X(r)$ with $r>\|x_0\|$.

Therefore, for any $r>\|x_0\|$ such that $S_X(r)\neq\emptyset$, 
the maps $f, \widetilde{f}\colon X\to X$ defined as
$$ f(x) = \left\{
\begin{array}{c l}
-x, & \text{ if } x \in S_X(r)\\
x, & \text{ if } x \notin S_X(r)
\end{array}
\right.
\quad , \quad 
\widetilde{f}(x) = \left\{
\begin{array}{c l}
x+x_0, & \text{ if } x \in S_X(r)\\
x, & \text{ if } x \notin S_X(r)
\end{array}
\right.$$ are centred isometries.

Observe that, whenever they are distinct from the identity, $f$ and $\widetilde{f}$ are non-linear maps.
\end{example}

\begin{example}[Isometries of one-dimensional spaces over finite fields]
Any finite field $\mathbb{F}_q$ is a non-Archimedean field in a unique way: 
equipped with the trivial valuation $|\lambda | = 1$, for any non-zero element 
$\lambda \in \mathbb{F}_q$. 
As a result, any bijective map $f \colon \mathbb{F}_q \to \mathbb{F}_q$ is an isometry. 

In the same way, every ultrametric norm on a one-dimensional linear space $X_1$ over $\mathbb{F}_q$ is trivial; i.e., for any such norm there exists 
$a \in (0, \infty)$ such that $\|x\|= a $ for any non-zero $x \in X_1$. 
Consequently, any bijective map $f \colon X_1 \to X_1$ is an isometry.
\end{example}

\medskip
\begin{corollary}\label{CorolarioMU} Let $X$ be an ultrametric normed space. If every centred isometry $\,f\colon X\to X\,$ is a linear  map, then one of the following options holds:
\begin{enumerate}
\item $X=0$.
\item $X$ is a one-dimensional linear space over $\Z/2\Z$. 
\item $X$ is a one-dimensional linear space over $\Z/3\Z$. 
\item $X$ is a two-dimensional linear space over $\Z/2\Z$. 
\end{enumerate}
\end{corollary}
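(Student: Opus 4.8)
The plan is to prove the contrapositive: if $X$ is not one of the four listed spaces, then there exists a centred isometry $f\colon X\to X$ that is not linear. By the Fractal Theorem (specifically parts (2) and (3)), non-linear centred isometries are abundant, so the strategy is to manufacture one by perturbing the identity on a single sphere while leaving all other spheres fixed, exactly as in the two Examples preceding the Corollary. The task then reduces to a careful case analysis showing that whenever $X$ avoids the four exceptional cases, at least one sphere $S_X(r)$ admits an isometry of itself distinct from the identity, which, glued with the identity on the complement, yields a non-linear centred isometry. (One must check that such a glued map really is non-linear: if it agreed with a linear map $T$, then $T$ would be the identity off $S_X(r)$ and hence the identity everywhere, a contradiction; this is the observation already made in the first Example.)

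First I would dispose of the one-dimensional case. If $\dim_{\K} X = 1$, then $X \cong \K$ as a metric space up to rescaling, so a centred bijection of $X$ is linear iff it has the form $x \mapsto \lambda x$; the group of centred isometries of $X$ is then the multiplicative group $\{\lambda \in \K : |\lambda| = 1\}$ acting by multiplication, whereas \emph{every} bijection fixing $0$ and preserving norms is a centred isometry. These two coincide precisely when every norm-preserving bijection of $\K$ fixing $0$ is multiplication by a unit. When the valuation is non-trivial this already fails (pick two elements of the same norm not related by a unit; more simply, apply the Examples). When the valuation is trivial, $\K = \mathbb{F}_q$ and every bijection fixing $0$ is an isometry, while the linear ones number $q - 1$; these agree iff $(q-1)! = q-1$, i.e. $q \le 3$. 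This isolates $\Z/2\Z$ and $\Z/3\Z$ as the only one-dimensional survivors.

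Next I would handle $\dim_{\K} X \ge 2$. The key point is that some sphere carries a non-trivial self-isometry. If the valuation of $\K$ is non-trivial, pick $\alpha$ with $|\alpha| = a > 1$ and a sphere $S_X(r) \ne \emptyset$; then translation by a vector $x_0$ with $0 < \|x_0\| < r$ — which exists since, taking any $0 \ne v \in X$, the vectors $\alpha^n v$ realise norms of arbitrarily small positive value — is a non-identity isometry of $S_X(r)$ by the second Example, giving a non-linear centred isometry. If the valuation is trivial, then $\K = \mathbb{F}_q$ and the norm takes finitely many values; choose the smallest positive value $r_0$ so that $B_X(0, r_0) = \{0\}$, hence $S_X(r_0) = B_X(0,r') \setminus \{0\}$ for the next value $r'$ (or $S_X(r_0)$ is a punctured ball of the unique positive norm if the norm is "trivial"). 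On $S_X(r_0)$, by Proposition~\ref{ThirdStatement} any permutation of the one-point balls $B_{\bar x}(r_0)$ (together with identity maps $B_{\bar x}(r_0) \to B_{\sigma(\bar x)}(r_0)$, which are forced since each ball is a singleton) is an isometry; a non-trivial such permutation exists as soon as $S_X(r_0)$ has at least two points, i.e. as soon as the punctured ball has $\ge 2$ elements. The remaining borderline situation is when $S_X(r_0)$ has $\ge 2$ points but only a permutation, not a genuinely non-linear map, is available — here one checks directly that the punctured ball in question has more than two elements unless $X$ is two-dimensional over $\Z/2\Z$ (where $B_X(0,r')\setminus\{0\}$ can have exactly $3$ or $1$ points depending on the norm), and in the three-point case the transposition of two of them is already non-linear.

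The main obstacle is the bookkeeping in the trivial-valuation, low-cardinality cases: one must verify that every sphere of a two-dimensional space over $\Z/2\Z$ really does force linearity of any self-isometry (so that case (4) genuinely is an exception and not merely a gap), which amounts to enumerating the possible ultrametric norms on $(\Z/2\Z)^2$ — up to scaling there are essentially two, $\max\{|a|,|b|\}$ and $\max\{|a|,c|b|\}$ with $c \ne 1$ — and checking in each that the isometry group equals $GL_2(\Z/2\Z)$ or a subgroup thereof acting linearly. Conversely one must confirm that any $X$ of $\K$-dimension $\ge 2$ over a field larger than $\Z/2\Z$, or of dimension $\ge 3$ over $\Z/2\Z$, has a sphere with $\ge 2$ points admitting a non-linear self-map; counting points of spheres via the decomposition (\ref{DescomposicionS}) makes this routine but slightly delicate. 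Everything else — the reduction to a single perturbed sphere, the non-linearity of the glued map, the invocation of Proposition~\ref{SecondStatement} and Proposition~\ref{ThirdStatement} — is immediate from the Fractal Theorem.
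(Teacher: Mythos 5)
Your overall strategy --- perturb the identity on a single sphere, glue via Proposition~\ref{SecondStatement}, and check that the result cannot agree with a linear map --- is the same as the paper's, but your case division rests on a false premise that leaves whole families of spaces uncovered. You assume, twice, that a trivial valuation forces $\K$ to be finite and the norm to take finitely many values, hence to have a smallest positive value $r_0$ with $B(0,r_0)=\{0\}$. Neither implication holds: every field carries the trivial valuation (e.g.\ $\Q$), and a space over a trivially valued field can carry an ultrametric norm with infinitely many values and no positive minimum (take $X=\bigoplus_{n\geq 1}\F_2\, e_n$ with $\|\textstyle\sum a_ie_i\|=\max\{1/i: a_i\neq 0\}$). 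For such spaces your case (C) construction --- permuting the singleton balls of the minimal sphere --- cannot even begin. The paper avoids this by splitting on the \emph{number of values taken by the norm}, not on the valuation: whenever there are $x_0,x_1,x_2$ with $0<\|x_0\|<\|x_1\|<\|x_2\|$, translation by $x_0$ on $S_X(\|x_1\|)$ glued with the identity is a non-linear centred isometry, with no hypothesis on $\K$ whatsoever. You do have this translation trick, but you only deploy it when the valuation is non-trivial.

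The second gap is the ``borderline situation'' you name but do not close, and it is precisely the hardest case of the paper's proof: $\K$ of characteristic $2$ with trivial valuation, the norm taking exactly two positive values, and $X_1=\{x:\|x\|\leq\|x_1\|\}$ reduced to $\{0,x_1\}$. There the minimal sphere is a single point (no non-trivial permutation of singleton balls exists) and $x\mapsto -x$ is the identity, so the non-linear isometry must be built on the \emph{larger} sphere by swapping the two points of one two-element ball $\{z,z+x_1\}$, with $z=x+y$ and $x,y,z\notin X_1$; this needs $\dim X\geq 3$, which is exactly what exempts $(\Z/2\Z)^2$. Your proposal never produces this construction. Two smaller points: in the one-dimensional analysis, the claim that every norm-preserving centred bijection is an isometry is false when the valuation is non-trivial (in $\Q_3$ the units $1$ and $4$ lie on the unit sphere at distance $1/3$, so a norm-preserving bijection need not respect the ball decomposition of the sphere) --- only your fallback ``apply the Examples'' is sound there; and the Corollary as stated is only an implication, so your proposed verification that $(\Z/2\Z)^2$ admits \emph{only} linear centred isometries, while true, is not required for the proof.
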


\begin{proof}
If the norm on $X$ is trivial, 
then any bijection 
$f \colon X \to X$ is an isometry. If $X$ has more than four elements, 
then we may take different $x_1,x_2\in X\setminus\{0\}$ such that 
$x_3=x_1+x_2\neq 0$ and $x_4\not\in\{x_1,x_2,x_3,0\}$ and define 
$f:X\to X$ as $f(x)=x$ for every $x\not\in\{x_3,x_4\}, f(x_3)=x_4, f(x_4)=x_3$. 
This is obviously a nonlinear onto isometry from $X$ to $X.$

\medskip
If $\|\cdot\|$ is not trivial, then we have two options: apart from the value 0, 
either $\|\cdot\|$ takes exactly two values or it takes at least three. 

If there are $x_0,x_1,x_2$ such that $0<\|x_0\|<\|x_1\|<\|x_2\|$, 
then we may define the mapping $f:X\to X$ as
$$f(x)=
\left\{
\begin{array}{c l}
x, & \text{ if }\|x\|\neq\|x_1\|\\
x+x_0, & \text{ if }\|x\|=\|x_1\|
\end{array}
\right..$$
Proposition~\ref{SecondStatement} implies that $f$ is an onto isometry and it is not 
linear because $f(x_2+x_1)=x_2+x_1$, $f(x_2)=x_2,$ $f(x_1)=x_1+x_0$. 

\medskip 
The only option left is that $\|\cdot\|$ takes exactly two positive values, say 
there are $x_0,x_1\in X$ such that $0<\|x_0\|<\|x_1\|$. 

Consider the map
$$ f \colon X \to X, \quad  f(x) := \begin{cases}
\ \ x \ ,  \qquad \mbox{ if } \| x \| =\|x_1\|\\
- x \ ,  \qquad \mbox{ if } \| x \| =\|x_0\|
\end{cases} .  $$ 

\medskip
This map is a bijection such that $f(0) = 0$ and it is clear that fulfils the 
conditions in Proposition~\ref{SecondStatement}, so it is an isometry. 

\medskip

Moreover, this map is not linear: 
$$f(x_1+x_0)-f(x_1)-f(x_0)=x_1+x_0-x_1+x_0=2x_0\neq 0\ldots $$
unless $2=0$ in $\K$. So, the only problem we have right now is 
that $\K$ has characteristic 2, its valuation is trivial (if it is not, then 
the valuation takes infinitely many values, and so does the norm), and the norm 
takes exactly two positive values. 

If $X$ fulfils everything, then take $0<\|x_1\|<\|x_2\|$. 
As every triangle is {\em isosceles in the big}, for every pair $x,y$ such that 
$\|x\|=\|y\|=\|x_1\|$ we have $\|x+y\|\leq\|x_1\|$. Furthermore, the valuation 
of $\K$ is trivial, so 
$$X_1=\{x\in X:\|x\|\leq\|x_1\|\}=\{x\in X:\|x\|=\|x_1\|\}\cup\{0\}$$
is closed under addition and scalar multiplication and this means that it is 
a vector subspace of $X$ with trivial norm, so if it has only affine 
isometries it must have 4 elements or less. If there is some nonlinear isometry 
$X_1\to X_1$ that sends 0 to 0 then we just need to extend it to $X$ by identity, 
so our only problem is that $X_1$ has at most four elements. In this case,  
$\K=\Z/2\Z$ implies that $X$ is (linearly) isometric to either $\Z/2\Z$ or $(\Z/2\Z)^2$ 
and $\K=\F_4$ implies that $X$ is (linearly) isometric to $\K$. 

In the first case, it is easy to see that a bijection $f:X\to X$ is an isometry 
if and only if $f(x+x_1)=f(x)+x_1$ for every $x\in X$. As the dimension of $X$ 
is at least 3, there are $x,y,z\not\in X_1$ such that $z=x+y$. Now, the map 
$f:X\to X$ defined as $f(x)=x$ whenever $x\not\in\{z,z+x_1\}$, 
$f(z)=z+x_1, f(z+x_1)=z$ is an isometry and it is not affine. 

In the second case, we only need to consider some linear isometry 
$\tau:X_1\to X_1$ that does not send $x_1$ to $x_1$ --it exists 
because $X_1$ contains four elements-- and define $f:X\to X$ as
$$
f(x)=\left\{
\begin{array}{c l}
x & \text{ if }x\not\in X_1\\
\tau(x) & \text{ if }x\in X_1
\end{array}
\right..
$$
This is an isometry by Proposition~\ref{SecondStatement}. Furthermore it is not linear because 
$f(x_2+x_1)=x_2+x_1,$ $f(x_1)=\tau(x_1)\neq x_1$ and $f(x_2)=x_2$. 

To finish the proof, we must observe that if we consider the four element field
$\F_4$, that can be seen as $\F_4=\big((\Z/2\Z)[x]\big)/(x^2+x+1)$, then every 
centred bijection $\F_4\to\F_4$ is an additive isometry. But it is clear that
the map $f:\F_4\to\F_4$ defined as
$$f([0])=[0], f([1])=[1],f([x])=[x^2],f([x^2])=[x]$$
is not $(\F_4)$-linear.
\end{proof}

\section*{Acknowledgments}
Supported in part by DGICYT project PID2019-103961GB-C21 (Spain), 
Feder funds and Junta de Extremadura program IB-18087.


\bibliographystyle{abbrv}

\bibliography{Fractal}

\end{document}